\documentclass[11pt]{article}

\usepackage[margin=1.02in]{geometry}
\usepackage{amsmath,amssymb,amsthm,mathtools}
\usepackage{enumitem}
\usepackage{microtype}
\usepackage[T1]{fontenc}
\usepackage{lmodern}
\usepackage{booktabs}
\usepackage{array}
\usepackage{hyperref}
\usepackage[nameinlink,capitalise]{cleveref}

\hypersetup{colorlinks=true,linkcolor=blue,citecolor=blue,urlcolor=blue}
\setlist{itemsep=2pt,topsep=4pt}

\newtheorem{theorem}{Theorem}[section]
\newtheorem{proposition}[theorem]{Proposition}
\newtheorem{lemma}[theorem]{Lemma}
\newtheorem{corollary}[theorem]{Corollary}
\newtheorem{remark}[theorem]{Remark}

\theoremstyle{definition}

\crefname{theorem}{Theorem}{Theorems}
\Crefname{theorem}{Theorem}{Theorems}
\crefname{lemma}{Lemma}{Lemmas}
\Crefname{lemma}{Lemma}{Lemmas}
\crefname{proposition}{Proposition}{Propositions}
\Crefname{proposition}{Proposition}{Propositions}
\crefname{corollary}{Corollary}{Corollaries}
\Crefname{corollary}{Corollary}{Corollaries}
\crefname{remark}{Remark}{Remarks}
\Crefname{remark}{Remark}{Remarks}
\crefname{example}{Example}{Examples}
\Crefname{example}{Example}{Examples}

\newcommand{\Ass}{\operatorname{Ass}}
\newcommand{\Min}{\operatorname{Min}}
\newcommand{\Max}{\operatorname{Max}}
\newcommand{\Spec}{\operatorname{Spec}}
\newcommand{\Supp}{\operatorname{Supp}}
\newcommand{\Ann}{\operatorname{Ann}}
\newcommand{\rank}{\operatorname{rank}}
\newcommand{\len}{\operatorname{length}}
\newcommand{\Ext}{\operatorname{Ext}}
\newcommand{\Hom}{\operatorname{Hom}}
\newcommand{\Frac}{\operatorname{Frac}}

\title{ On the iso-Artinianness of one-dimensional Noetherian rings}
\author{Xiaolei Zhang,\ Ran Yan,\ Wei Qi\\
\small School of Mathematics and Statistics, Tianshui Normal University,
 Tianshui 741001, China\\
zxlrghj@163.com;\ yanrabc@163.com;\  qwrghj@126.com}
\date{}

\begin{document}
\maketitle

\begin{abstract}
Let $R$ be a one-dimensional commutative Noetherian ring with a unique minimal prime
$\mathfrak p$ such that $D=R/\mathfrak p$ is a principal ideal domain.  We give a
complete  characterization of the iso-Artinian property in this class, that is, the
following conditions are equivalent:
\begin{enumerate}[label=\textup{(\roman*)}]
	\item $R$ is iso-Artinian;
	\item $\mathfrak pR_{\mathfrak p}=0$;
	\item $\len_R(\mathfrak p)<\infty$;
	\item $\mathfrak p/\mathfrak p^2$  is torsion over $D$.
\end{enumerate} 

As a consequence, we completely answer the  Question~3.9 of Daneshvar and Divaani-Aazar:
under their hypotheses $\Min R\subsetneq\Ass R$, the ring is iso-Artinian precisely when its
nilradical has finite length, and it is non-iso-Artinian precisely when the conormal module has
positive rank. 
\end{abstract}

\medskip
\noindent\textbf{Keywords.} iso-Artinian ring; principal ideal domain;  conormal
module;  associated prime.

\medskip
\noindent\textbf{2020 Mathematics Subject Classification.} 13E10, 13C05, 13A15, 16P20.

\section{Introduction}

Throughout the paper, all rings are commutative with identity and all modules are unital. About a decade ago, Facchini and Nazemian introduced iso-Artinian modules and rings as chain conditions in which
eventual equality is replaced by eventual isomorphism
\cite{FacchiniNazemian2016,FacchiniNazemian2017,FacchiniNazemian2019}.  A
commutative ring $R$ is \emph{iso-Artinian} if for every descending chain of ideals
\[
 I_1\supseteq I_2\supseteq I_3\supseteq\cdots
\]
there exists $n_0$ such that $I_n\cong I_{n_0}$ as $R$-modules for every $n\ge n_0$.
The condition is strictly weaker than the Artinian condition, for example, every non-field PID is
iso-Artinian, although it is not Artinian.

The structure of commutative iso-Artinian rings was studied in depth by Daneshvar and
Divaani-Aazar \cite{DaneshvarDivaaniAazar2025}.  Among other results, they proved that every
commutative iso-Artinian ring has Krull dimension at most one, every minimal prime is
associated, and the quotient by every associated prime is a PID.  Their decomposition theorem
isolates, as an important unresolved Noetherian case, one-dimensional rings with a unique
minimal prime.  They ended their paper with the following question
\cite[Question~3.9]{DaneshvarDivaaniAazar2025}:

\medskip
\noindent\emph{If $R$ is a one-dimensional Noetherian ring with a unique minimal prime
$\mathfrak p$, if $R/\mathfrak p$ is a PID, and if
$\Min R\subsetneq\Ass R$, must $R$ be iso-Artinian?}
\medskip

A natural question is how to characterize the iso-artinianess of  a one-dimensional Noetherian ring with a unique minimal prime? Our main result is the following complete characterization.  Standard facts on Noetherian
rings, finite-length modules, localization, and the Artin--Rees lemma are used throughout; see
\cite{AtiyahMacdonald1969,Eisenbud1995,Matsumura1986}.

\begin{theorem}[Main theorem]\label{thm:intro-main}
Let $R$ be a one-dimensional commutative Noetherian ring with unique minimal prime
$\mathfrak p$, and suppose that
\[
 D:=R/\mathfrak p
\]
is a principal ideal domain.  Then the following conditions are equivalent:
\begin{enumerate}[label=\textup{(\roman*)}]
\item $R$ is iso-Artinian;
\item $\mathfrak pR_{\mathfrak p}=0$;
\item $\mathfrak p$ has finite length as an $R$-module;
\item the $D$-module $\mathfrak p/\mathfrak p^2$ is torsion;
\item $\rank_D(\mathfrak p/\mathfrak p^2)=0$;
\item there exists $s\in R\setminus\mathfrak p$ such that $s\mathfrak p=0$.
\end{enumerate}
Consequently,
\[
 R\text{ is not iso-Artinian}
 \quad\Longleftrightarrow\quad
 \rank_D(\mathfrak p/\mathfrak p^2)>0.
\]
\end{theorem}

The result completely resolves the question above: the answer is positive exactly for
zero-dimensional nilpotent thickenings of the PID and negative exactly when the conormal
module $\mathfrak p/\mathfrak p^2$ has positive rank.  In particular, both positive and negative examples occur even when
$D$ is a DVR.

\section{Preliminaries}

 Let $R$ be a one-dimensional commutative Noetherian ring with unique minimal prime
 $\mathfrak p$.  Since $R$ is Noetherian, its nilradical is nilpotent.  Because
$\mathfrak p$ is the unique minimal prime,
\[
 \sqrt{0}=\mathfrak p.
\]
Moreover, the canonical map
\[
 \Spec R\longrightarrow\Spec D,
 \qquad
 \mathfrak q\longmapsto\mathfrak q/\mathfrak p,
\]
is a bijection.  Since $\dim R=1$, the PID $D$ is not a field.

We first record two standard facts that will be used repeatedly.

\begin{lemma}\label{lem:finite-length-support}
Let $M$ be a finitely generated module over a Noetherian ring $A$.  If
$\Supp_A(M)$ consists only of finitely many maximal ideals, then $M$ has finite length.
Conversely, a finite-length module is supported on finitely many maximal ideals.
\end{lemma}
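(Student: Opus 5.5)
The plan is to prove both directions with a prime filtration for the forward implication and induction on length for the converse.

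Suppose first that $M$ is finitely generated over the Noetherian ring $A$ and that $\Supp_A(M)=\{\mathfrak m_1,\dots,\mathfrak m_r\}$ with each $\mathfrak m_i$ maximal. Since $M$ is finitely generated over a Noetherian ring, it admits a prime filtration
\[
0=M_0\subseteq M_1\subseteq\cdots\subseteq M_n=M,\qquad M_i/M_{i-1}\cong A/\mathfrak q_i,
\]
with each $\mathfrak q_i$ prime. Each $\mathfrak q_i$ lies in $\Supp_A(M)$, because localizing at $\mathfrak q_i$ gives $(A/\mathfrak q_i)_{\mathfrak q_i}\neq 0$ as a subquotient of $M_{\mathfrak q_i}$. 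Hence every $\mathfrak q_i$ is one of the maximal ideals $\mathfrak m_j$. Each factor $A/\mathfrak m_j$ is then a field, hence a simple $A$-module, so the filtration is a composition series. Therefore $\len_A(M)=n<\infty$.

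For the converse, let $M$ have finite length and induct on $\len_A(M)$. If $M=0$, the support is empty. Otherwise choose a simple submodule $S\subseteq M$; then $S\cong A/\mathfrak m$ for some maximal ideal $\mathfrak m$. The exact sequence $0\to S\to M\to M/S\to 0$ gives
\[
\Supp_A(M)=\Supp_A(S)\cup\Supp_A(M/S).
\]
Here $\Supp_A(A/\mathfrak m)=\{\mathfrak m\}$, since $(A/\mathfrak m)_{\mathfrak q}\ne 0$ iff $\mathfrak m\subseteq\mathfrak q$, i.e.\ $\mathfrak q=\mathfrak m$. By induction, $\Supp_A(M/S)$ is a finite set of maximal ideals, so $\Supp_A(M)$ is as well. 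More precisely, it consists of the maximal ideals occurring as composition factors. Note that this direction does not need $M$ to be given as finitely generated, since a finite-length module is automatically finitely generated.

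No step here is a genuine obstacle. The only point needing care is checking that the primes in the filtration lie in the support, which follows from exactness of localization.
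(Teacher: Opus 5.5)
Your proof is correct. The converse is essentially the paper's argument: the support of a finite-length module is the set of maximal ideals $\mathfrak m$ with $A/\mathfrak m$ a composition factor, which you make explicit by induction on length.

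The forward direction takes a different route. The paper passes to the ring $A/\Ann_A(M)$. It notes that this ring is Noetherian of dimension zero, implicitly using $\Supp_A(M)=V(\Ann_A(M))$ for finitely generated $M$. It then invokes the theorem that zero-dimensional Noetherian rings are Artinian, and finally uses that finitely generated modules over Artinian rings have finite length. You instead take a prime filtration of $M$, check via exactness of localization that every prime occurring in it lies in $\Supp_A(M)$, and conclude that the filtration is already a composition series.

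What your version buys:
\begin{itemize}
\item It avoids both the Akizuki--Hopkins-type theorem and the identification $\Supp_A(M)=V(\Ann_A(M))$.
\item It directly exhibits the composition factors as $A/\mathfrak m_j$ with $\mathfrak m_j\in\Supp_A(M)$, which ties the two directions together.
\end{itemize}
The paper's version is shorter if one takes those standard facts for granted.

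Neither argument actually uses that the support is finite, only that it consists of maximal ideals. Your filtration is finite by construction, and a zero-dimensional Noetherian ring has only finitely many maximal ideals anyway.
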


\begin{proof}
If $\Supp_A(M)=\{\mathfrak m_1,\ldots,\mathfrak m_t\}$, then
$A/\Ann_A(M)$ is a zero-dimensional Noetherian ring, hence Artinian.  Since $M$ is finitely
generated over this Artinian ring, it has finite length.  The converse is standard: the support
of a finite-length module is the finite set of annihilators of its composition factors.
\end{proof}

\begin{lemma}\label{lem:ext-finite-length}
Let $A$ be Noetherian, let $\mathfrak q\in\Spec A$, put $B=A/\mathfrak q$, and let $J$ be
a finite-length $A$-module.  Then $\Ext^1_A(B,J)$ is a finite-length $A$-module.  Moreover,
$\mathfrak q$ annihilates $\Ext^1_A(B,J)$, so it is naturally a $B$-module.
\end{lemma}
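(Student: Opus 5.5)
The plan is to use the standard structure theory of Ext over a Noetherian ring. There are two claims: finite length, and annihilation by $\mathfrak q$. I would handle the annihilation first, since it is formal, and then reduce finite length to a support computation via \cref{lem:finite-length-support}.

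\emph{Annihilation.} For any $a\in A$, the endomorphism of $\Ext^1_A(B,J)$ given by multiplication by $a$ can be computed in either argument. It agrees with the map induced by multiplication by $a$ on $B$, and also with the map induced by multiplication by $a$ on $J$. This is the usual bifunctoriality fact: $\Ext$ is $A$-linear in each variable, and the two induced $A$-actions coincide. If $a\in\mathfrak q$, then multiplication by $a$ on $B=A/\mathfrak q$ is the zero map. Hence $a$ kills $\Ext^1_A(B,J)$, so $\mathfrak q$ annihilates it and it is naturally a $B$-module. By the same argument, $\Ann_A(J)$ also annihilates $\Ext^1_A(B,J)$.

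\emph{Finite generation.} Since $A$ is Noetherian and $B$ is finitely generated, choose a resolution of $B$ by finitely generated free modules $F_\bullet$. Then $\Hom_A(F_i,J)\cong J^{n_i}$ is finitely generated. Therefore $\Ext^1_A(B,J)$, being a subquotient of $J^{n_1}$, is finitely generated.

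\emph{Support.} Localization commutes with Ext for finitely generated modules over a Noetherian ring, so
\[
\Ext^1_A(B,J)_{\mathfrak r}\cong\Ext^1_{A_{\mathfrak r}}(B_{\mathfrak r},J_{\mathfrak r}).
\]
Consequently $\Supp\Ext^1_A(B,J)\subseteq\Supp J$. Alternatively, this follows directly from the subquotient description: a subquotient of $J^{n_1}$ has support inside $\Supp J$. The latter route avoids the localization statement entirely, so it is the one I would use. By the converse direction of \cref{lem:finite-length-support}, $\Supp J$ is a finite set of maximal ideals. The Ext module is finitely generated with support inside this finite set, so the forward direction of \cref{lem:finite-length-support} gives finite length.

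I do not expect a real obstacle here. The only point needing care is the justification that multiplication by $a$ on $B$ induces multiplication by $a$ on Ext. I would verify this on the level of the complex $\Hom_A(F_\bullet,J)$: lift multiplication by $a$ on $B$ to multiplication by $a$ on $F_\bullet$, which is a valid chain map lifting it. This lift induces multiplication by $a$ on the Hom complex. For $a\in\mathfrak q$ the map on $B$ is zero, so it is also lifted by the zero chain map. By uniqueness of lifts up to homotopy, multiplication by $a$ on Ext is zero.
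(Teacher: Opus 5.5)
Your proposal is correct and matches the paper's proof: the paper likewise computes $\Ext^1_A(B,J)$ from a finite free presentation (as a quotient of $\Hom_A(K,J)$ rather than a subquotient of $\Hom_A(F_1,J)\cong J^{n_1}$), bounds its support by $\Supp_A(J)$ to apply \cref{lem:finite-length-support}, and gets $\mathfrak q$-annihilation from functoriality in the first variable. In your version the support step can even be dropped, since any subquotient of the finite-length module $J^{n_1}$ automatically has finite length.
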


\begin{proof}
Because $A$ is Noetherian and $B=A/\mathfrak q$ is finitely generated, there is an exact
sequence
\[
 0\longrightarrow K\longrightarrow F\longrightarrow B\longrightarrow0
\]
with $F$ a finite free $A$-module and $K$ finitely generated.  Applying
$\Hom_A(-,J)$ gives a surjection
\[
 \Hom_A(K,J)\twoheadrightarrow \Ext^1_A(B,J).
\]
The module $\Hom_A(K,J)$ is finitely generated and
\[
 \Supp_A\Hom_A(K,J)\subseteq\Supp_A(J).
\]
Consequently $\Ext^1_A(B,J)$ is finitely generated and its support is also contained in
$\Supp_A(J)$.  Since $J$ has finite length, this support is a finite set of maximal ideals;
Lemma~\ref{lem:finite-length-support} therefore shows that $\Ext^1_A(B,J)$ has finite length.

For $r\in\mathfrak q$, multiplication by $r$ on $B$ is the zero endomorphism.  Functoriality
of Ext in the first variable therefore shows that $r$ acts trivially on
$\Ext^1_A(B,J)$.  For a commutative ring the scalar actions obtained from the first and second
variables coincide, so $\mathfrak q\Ext^1_A(B,J)=0$.  Thus $\Ext^1_A(B,J)$ is naturally a
$B$-module.
\end{proof}

We shall also use the following elementary observation concerning scalar automorphisms of a
finite-length module.

\begin{lemma}\label{lem:scalar-auto}
Let $A$ be Noetherian and let $J$ be a finite-length $A$-module.  If
$r\notin\mathfrak m$ for every $\mathfrak m\in\Supp_A(J)$, then multiplication by $r$ is an
automorphism of $J$.
\end{lemma}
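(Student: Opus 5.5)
We prove the statement by reducing to local modules and applying Nakayama's lemma. Since $J$ has finite length, Lemma~\ref{lem:finite-length-support} shows that $\Supp_A(J)=\{\mathfrak m_1,\ldots,\mathfrak m_t\}$ is a finite set of maximal ideals. Let $\mu_r\colon J\to J$ denote multiplication by $r$; it is $A$-linear because $A$ is commutative. Since $J$ has finite length, $\mu_r$ is an automorphism as soon as it is surjective. Indeed, a surjective endomorphism of a finite-length module is injective by additivity of length: $\len(\ker\mu_r)=\len(J)-\len(\operatorname{im}\mu_r)=0$.

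For surjectivity it suffices to check locally. The cokernel $J/rJ$ vanishes if and only if $(J/rJ)_{\mathfrak m}=J_{\mathfrak m}/rJ_{\mathfrak m}=0$ for every maximal ideal $\mathfrak m$. If $\mathfrak m\notin\Supp_A(J)$, then $J_{\mathfrak m}=0$ and there is nothing to prove. If $\mathfrak m=\mathfrak m_i$, then by hypothesis $r\notin\mathfrak m_i$, so $r/1$ is a unit in $A_{\mathfrak m_i}$. Hence $rJ_{\mathfrak m_i}=J_{\mathfrak m_i}$, and $(J/rJ)_{\mathfrak m_i}=0$.

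Therefore $J/rJ=0$, so $\mu_r$ is surjective and hence an automorphism. The same local argument applied to $\ker\mu_r$ would also give injectivity directly: $(\ker\mu_r)_{\mathfrak m}$ is the kernel of multiplication by a unit, so it vanishes. Using this, one need not invoke finite length at all beyond the finiteness of the support.

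The argument has no real obstacle. The only point to watch is that vanishing of a module is a local property, which applies to both the kernel and the cokernel of $\mu_r$.
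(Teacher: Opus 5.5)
Your proof is correct and follows essentially the same route as the paper. Both check that multiplication by $r$ is an isomorphism after localizing, where either $J$ vanishes or $r$ becomes a unit; the paper localizes at every prime, while you localize at maximal ideals. Your length-additivity argument for injectivity is an unnecessary extra, and the Nakayama's lemma you announce in your opening sentence is never actually used.
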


\begin{proof}
After localizing at a prime $\mathfrak q$, either $J_{\mathfrak q}=0$, or
$\mathfrak q\in\Supp_A(J)$ and $r$ becomes a unit.  Hence multiplication by $r$ is an
isomorphism after localization at every prime, and therefore is an isomorphism globally.
\end{proof}

The following proposition packages the intrinsic conditions which will occur in the main
theorem.

\begin{proposition}\label{prop:intrinsic-equivalences}
Let $R$ be a one-dimensional Noetherian ring with unique minimal prime $\mathfrak p$ and let
$D=R/\mathfrak p$ be a PID.  Then the following conditions are equivalent:
\begin{enumerate}[label=\textup{(\roman*)}]
\item $\mathfrak pR_{\mathfrak p}=0$;
\item $\mathfrak p$ has finite length;
\item there exists $s\in R\setminus\mathfrak p$ with $s\mathfrak p=0$;
\item $\mathfrak p/\mathfrak p^2$ is a torsion $D$-module;
\item $\rank_D(\mathfrak p/\mathfrak p^2)=0$.
\end{enumerate}
\end{proposition}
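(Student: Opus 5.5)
We prove the equivalences via the cycle (i)$\Rightarrow$(iii)$\Rightarrow$(ii)$\Rightarrow$(i), and then attach (iv) and (v). Throughout we use that $\mathfrak p=\sqrt0$ is finitely generated and nilpotent, and that $R_{\mathfrak p}$ is local with maximal ideal $\mathfrak pR_{\mathfrak p}$.

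For (i)$\Rightarrow$(iii): since $\mathfrak p$ is finitely generated, $\mathfrak pR_{\mathfrak p}=0$ means each generator is killed by some element outside $\mathfrak p$. The product of these elements is an $s\in R\setminus\mathfrak p$ with $s\mathfrak p=0$. For (iii)$\Rightarrow$(ii): $\mathfrak p$ is a finitely generated $R$-module annihilated by $s\notin\mathfrak p$, so $\mathfrak p\notin\Supp_R(\mathfrak p)$. Every other prime of $R$ is maximal, because $\dim R=1$ and $\mathfrak p$ is the unique minimal prime. Also $\Supp_R(\mathfrak p)=V(\Ann\mathfrak p)\subseteq V(sR+\mathfrak p)$, which corresponds to $V(\bar s)$ in the PID $D$. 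Since $\bar s\neq0$, this is a finite set of maximal ideals, and \Cref{lem:finite-length-support} gives that $\mathfrak p$ has finite length. For (ii)$\Rightarrow$(i): by \Cref{lem:finite-length-support}, $\Supp_R(\mathfrak p)$ consists of maximal ideals, so $\mathfrak p\notin\Supp_R(\mathfrak p)$, i.e.\ $\mathfrak pR_{\mathfrak p}=0$.

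For (iii)$\Rightarrow$(iv): $s$ kills $\mathfrak p/\mathfrak p^2$, and its image $\bar s\in D$ is nonzero, so $\mathfrak p/\mathfrak p^2$ is torsion. The step (iv)$\Leftrightarrow$(v) holds because $\mathfrak p/\mathfrak p^2$ is a finitely generated module over the domain $D$. Its rank is $\dim_{\Frac D}$ of the localization at the zero ideal, and that vanishes exactly when every element is killed by a nonzero element of $D$ (using finite generation).

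The main obstacle is (iv)$\Rightarrow$(i), namely passing from the conormal module to $\mathfrak p$ itself. We localize at $\mathfrak p$. Since $(\mathfrak p/\mathfrak p^2)\otimes_D \Frac D\cong \mathfrak pR_{\mathfrak p}/\mathfrak p^2R_{\mathfrak p}$, condition (iv) gives $\mathfrak pR_{\mathfrak p}=(\mathfrak pR_{\mathfrak p})^2$. Now $R_{\mathfrak p}$ is a Noetherian local ring and $\mathfrak pR_{\mathfrak p}$ is finitely generated. Nakayama's lemma, or alternatively nilpotency, since $\mathfrak p R_{\mathfrak p}=(\mathfrak pR_{\mathfrak p})^{2^k}=0$ for large $k$, then yields $\mathfrak pR_{\mathfrak p}=0$. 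The only care needed is to justify that the torsion condition over $D$ means exactly that localizing at $D\setminus0$, which is the same as localizing $R$ at $R\setminus\mathfrak p$ acting through $D$, kills $\mathfrak p/\mathfrak p^2$. This follows from exactness of localization applied to $0\to\mathfrak p^2\to\mathfrak p\to\mathfrak p/\mathfrak p^2\to0$.
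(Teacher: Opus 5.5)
Your proof is correct and follows essentially the same route as the paper: you take the product of annihilators of generators for (i)$\Rightarrow$(iii), use a support/finite-length argument for (iii)$\Rightarrow$(ii), and localize at $\mathfrak p$ and apply Nakayama for (iv)$\Rightarrow$(i). The differences are minor. You get (iii)$\Rightarrow$(iv) directly from $\bar s$ killing $\mathfrak p/\mathfrak p^2$, where the paper proves (i)$\Rightarrow$(iv) via the tensor identity. You also bound $\Supp_R(\mathfrak p)$ by $V(\bar s)\subseteq\Spec D$, where the paper observes that $R/(s)$ is Artinian.
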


\begin{proof}
$(i)\Rightarrow (iii):$  Since $\mathfrak p$ is finitely generated, choose generators
$x_1,\ldots,x_t$.  For each $i$ there is $s_i\notin\mathfrak p$ with $s_ix_i=0$.
Then $s=s_1\cdots s_t\notin\mathfrak p$ and $s\mathfrak p=0$.

$(iii)\Rightarrow (ii):$  Every prime ideal containing $s$ is different from the unique minimal prime
$\mathfrak p$ and is therefore maximal.  Hence $R/(s)$ is a zero-dimensional Noetherian ring,
so it is Artinian.  The finitely generated $R/(s)$-module $\mathfrak p$ has finite length. 

$(ii)\Rightarrow (i):$ If $\mathfrak p$ has finite length, then its localization at the non-maximal prime
$\mathfrak p$ is zero. 

$(i)\Leftrightarrow (iv):$  Let $K=\Frac(D)$.  Since localization at
$\mathfrak p$ corresponds to localization of $D$ at its zero prime,
\[
 (\mathfrak p/\mathfrak p^2)\otimes_DK
 \cong
 \mathfrak pR_{\mathfrak p}/\mathfrak p^2R_{\mathfrak p}.
\]
If $\mathfrak pR_{\mathfrak p}=0$, the right-hand side vanishes.  Conversely, if the
right-hand side vanishes, then
\[
 \mathfrak pR_{\mathfrak p}=\mathfrak p^2R_{\mathfrak p}.
\]
The ring $R_{\mathfrak p}$ is a zero-dimensional Noetherian local ring whose maximal ideal is
$\mathfrak pR_{\mathfrak p}$.  Nakayama's lemma applied to the finitely generated module
$\mathfrak pR_{\mathfrak p}$ gives $\mathfrak pR_{\mathfrak p}=0$. 

$(iv)\Leftrightarrow (v):$ Finally, $\mathfrak p/\mathfrak p^2$ is finitely generated over the domain $D$, so
being torsion is equivalent to having rank zero. 
\end{proof}

\begin{remark}\label{rem:geometry}
Condition~(i) of Proposition~\ref{prop:intrinsic-equivalences} says that the generic local ring of $R$ is the
field $K=\Frac(D)$.  Hence it is precisely the condition that the one-dimensional scheme
$\Spec R$ be generically reduced.  Condition (ii) says equivalently that the nilpotent
structure is supported on finitely many closed points.
\end{remark}

\section{A stabilization theorem for finite-length extensions}

The positive direction of the main theorem is a consequence of the next result.  It is useful in
its own right because it does not require the nilradical to be square-zero or the extension to
split.

\begin{theorem}\label{thm:extension-stabilization}
Let $R$ be Noetherian with unique minimal prime $\mathfrak p$, and assume that
$D=R/\mathfrak p$ is a PID.  Let
\[
 I_1\supseteq I_2\supseteq I_3\supseteq\cdots
\]
be a descending chain of ideals.  Suppose that for some $n_0$ there is a finite-length
$R$-module $J$ such that
\[
 I_n\cap\mathfrak p=J\qquad(n\ge n_0).
\]
Then the chain $(I_n)_{n\ge n_0}$ is eventually constant up to isomorphism.
\end{theorem}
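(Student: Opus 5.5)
The plan is to reduce the chain, for $n\ge n_0$, to a sequence of extension classes in a single finite-length Ext module, and then to show these classes eventually agree up to scalar automorphisms that do not change the isomorphism type of the middle term.

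\textbf{Step 1: presenting each $I_n$ as an extension.} Fix $n\ge n_0$. Since $I_n\cap\mathfrak p=J$, we have $I_n/J\cong (I_n+\mathfrak p)/\mathfrak p$, which is an ideal $a_nD$ of the PID $D$. If $a_n=0$ for some $n$, then $I_m=J$ for all $m\ge n$ and there is nothing to prove. So assume every $a_n\neq0$. Then $I_n/J\cong a_nD\cong D$, and we obtain a short exact sequence
\[
0\to J\to I_n\to D\to 0
\]
with class $e_n\in E:=\Ext^1_R(D,J)$. By Lemma~\ref{lem:ext-finite-length}, $E$ is a finite-length $D$-module.

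Since $a_{n+1}D\subseteq a_nD$, write $a_{n+1}=a_nd_n$. The inclusion $I_{n+1}\hookrightarrow I_n$ restricts to the identity on $J$. On the quotients, under the identifications $a_nD\cong D$, it is multiplication by $d_n$ on $D$. Hence $I_{n+1}$ is the pullback of $I_n$ along $d_n$, so $e_{n+1}=d_ne_n$. Putting $c_n=d_{n_0}\cdots d_{n-1}$ (lifted to $R$), we get
\[
e_n=c_ne_{n_0}.
\]

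\textbf{Step 2: scalars that preserve the isomorphism type.} If $r\in R$ lies outside every maximal ideal in $\Supp_R(J)$, then multiplication by $r$ is an automorphism of $J$ by Lemma~\ref{lem:scalar-auto}. Pushing out along it shows that extensions with classes $e$ and $re$ have isomorphic middle terms. Similarly, a unit of $D$ acting on the quotient does not change the middle term. So it suffices to find $N$ such that, for every $n\ge N$, $e_n=r_ne_N$ for some such $r_n$.

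\textbf{Step 3: splitting $c_n$ according to the support.} Let $\mathfrak m_1,\dots,\mathfrak m_t$ be the maximal ideals in $\Supp(J)$. These correspond to primes $\pi_1,\dots,\pi_t$ of $D$, and $\Supp(E)\subseteq\{\mathfrak m_i\}$. Decompose
\[
E=\bigoplus_i E_{\mathfrak m_i},
\]
and choose $k$ with $\mathfrak m_i^kE=0$ for all $i$. The $\pi_i$-adic valuation $v_i(c_n)$ is nondecreasing in $n$, so for each $i$ one of two things happens:
\begin{itemize}
\item $v_i(c_n)$ is eventually constant; or
\item $v_i(c_n)\ge k$ eventually, in which case the $E_{\mathfrak m_i}$-component of $e_n$ is $0$ from then on.
\end{itemize}
Choose $N$ beyond all these thresholds. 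For $n\ge N$ write $c_n=c_Nt_n$. On each $E_{\mathfrak m_i}$ whose component does not vanish, $t_n$ has $\pi_i$-valuation $0$, so it acts as a unit. On components that are already $0$, any scalar works.

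\textbf{Step 4: a global scalar via CRT (the main obstacle).} The difficulty is that $t_n$ itself may lie in some $\mathfrak m_i$ whose component is dead, so it need not act as an automorphism of $J$. To fix this, use the Chinese Remainder Theorem to choose $r_n\in R$ such that:
\begin{itemize}
\item $r_n\equiv t_n \pmod{\mathfrak m_i^k}$ for each $i$ with live component;
\item $r_n\equiv 1 \pmod{\mathfrak m_i^k}$ for each $i$ with dead component.
\end{itemize}
Then $r_n\notin\mathfrak m_i$ for all $i$, and
\[
r_ne_N=t_ne_N=e_n
\]
componentwise, because each $E_{\mathfrak m_i}$ is killed by $\mathfrak m_i^k$. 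Step 2 then gives $I_n\cong I_N$ for all $n\ge N$.

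The delicate points I expect are checking carefully that the inclusion maps induce exactly the pullback by $d_n$, and checking the compatibility of the pushout along $r_n$ with Ext as an $R$-module. These are routine but need care.
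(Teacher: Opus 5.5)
Your proposal is correct and follows the paper's proof. It uses the same Ext classes $e_n\in\Ext^1_R(D,J)$ with $e_{n+1}=d_ne_n$, the same Chinese-remainder adjustment of the transition scalar to one avoiding every maximal ideal of $\Supp_R(J)$, and a pushout along the resulting automorphism of $J$; the only difference is that you get stabilization from $\pi_i$-adic valuations and the primary decomposition of $E$, whereas the paper uses the descending chain condition on the cyclic submodules $De_n$ and the annihilator $\Ann_D(e_n)$.

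One small slip: you want $k\ge1$ with $\mathfrak m_i^kE_{\mathfrak m_i}=0$ for each $i$, as you use later, not $\mathfrak m_i^kE=0$, which fails once $E$ has a nonzero component at another maximal ideal.
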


\begin{proof}
Put
\[
 A_n=(I_n+\mathfrak p)/\mathfrak p\subseteq D.
\]
If $A_n=0$ for some $n\ge n_0$, then $I_k\subseteq\mathfrak p$ for all $k\ge n$; hence
$I_k=J$ for all $k\ge n$, and there is nothing to prove.  We may therefore assume that every
$A_n$ is nonzero.

Since $D$ is a PID, choose generators $a_n\in D$ with $A_n=a_nD$ in such a way that
\[
 a_{n+1}=a_nd_n\in A_{n+1}\qquad(d_n\in D),
\]
as $A_n\supseteq A_{n+1}.$
For every $n\ge n_0$ we have a short exact sequence of $R$-modules
\begin{equation}\label{eq:extension-In}
 0\longrightarrow J\longrightarrow I_n\longrightarrow A_n\longrightarrow0.
\end{equation}
Using the $R$-isomorphism $D\to A_n$, $x\mapsto a_nx$, let
\[
 e_n\in E:=\Ext^1_R(D,J)
\]
be the Yoneda class of \eqref{eq:extension-In}.

We claim that
\begin{equation}\label{eq:recurrence-ext}
 e_{n+1}=d_ne_n.
\end{equation}
Indeed, the inverse image of $A_{n+1}$ under the map $I_n\to A_n$ is exactly $I_{n+1}$.
To see this, if $x\in I_n$ maps into $A_{n+1}$, choose $y\in I_{n+1}$ with the same image.
Then $x-y\in I_n\cap\mathfrak p=J\subseteq I_{n+1}$, so $x\in I_{n+1}$.  Thus the
extension for $I_{n+1}$ is the pullback of the extension for $I_n$ along multiplication by
$d_n$ on $D$.  In the standard $D$-module structure on Ext, this is exactly
\eqref{eq:recurrence-ext}.

By Lemma~\ref{lem:ext-finite-length}, $E$ is a finite-length $D$-module.  Hence the descending
chain of cyclic submodules
\[
 De_{n_0}\supseteq De_{n_0+1}\supseteq De_{n_0+2}\supseteq\cdots
\]
stabilizes.  Choose $N$ such that
\[
 De_n=De_N\qquad(n\ge N).
\]
We prove that $I_n\cong I_m$ for all $m\ge n\ge N$.

Fix such $m,n$.  By repeated use of \eqref{eq:recurrence-ext}, there is $d\in D$ with
\[
 e_m=de_n.
\]
If $e_n=0$, then also $e_m=0$, and both extensions split; hence
$I_n\cong J\oplus D\cong I_m$.  Assume $e_n\ne0$, and put
\[
 B=\Ann_D(e_n).
\]
Since $De_m=De_n$, the element $de_n$ generates the cyclic module $De_n\cong D/B$.
Therefore
\begin{equation}\label{eq:d-unit-mod-B}
 dD+B=D.
\end{equation}

Let
\[
 \Supp_R(J)=\{\mathfrak m_1,\ldots,\mathfrak m_t\}
\]
and write $\mathfrak q_i=\mathfrak m_i/\mathfrak p\in\Max D$; this is legitimate
because the unique minimal prime $\mathfrak p$ is contained in every prime of $R$.
Identify primes of $D$ with primes of $R$ containing $\mathfrak p$.  Since
$De_n$ is a submodule of the finite-length $D$-module $E$ and
$\Supp_R(E)\subseteq\Supp_R(J)$, every prime divisor of $B$ is among the
$\mathfrak q_i$.

If $B=0$, then \eqref{eq:d-unit-mod-B} says that $d$ is a unit of $D$; in this case set
$c=d$.  Assume $B\ne0$.  Since $De_n\cong D/B$ and
$\Supp_D(De_n)\subseteq\Supp_D(E)$, every maximal ideal of $D$
containing $B$ is one of the $\mathfrak q_i$.  Moreover,
\eqref{eq:d-unit-mod-B} implies that $d$ belongs to none of the maximal ideals containing $B$.
For every $\mathfrak q_i$ not containing $B$, we have $B+\mathfrak q_i=D$.  Hence the Chinese
remainder theorem gives $c\in D$ satisfying
\begin{equation}\label{eq:c-choice}
 c\equiv d\pmod B,
 \qquad
 c\equiv1\pmod{\mathfrak q_i}
 \quad\text{for every }\mathfrak q_i\not\supseteq B.
\end{equation}
If $\mathfrak q_i\supseteq B$, then $c-d\in B\subseteq\mathfrak q_i$ and
$d\notin\mathfrak q_i$, so $c\notin\mathfrak q_i$.  If $\mathfrak q_i\not\supseteq B$,
then $c\equiv1\pmod{\mathfrak q_i}$, so again $c\notin\mathfrak q_i$.  Thus $c$ avoids every
prime in $\Supp_R(J)$, and because $c-d\in B=\Ann_D(e_n)$ we have $ce_n=de_n$.

Choose a lift $r\in R$ of $c$.  Then $r\notin\mathfrak m_i$ for every
$\mathfrak m_i\in\Supp_R(J)$, so multiplication by $r$ is an automorphism of $J$ by
Lemma~\ref{lem:scalar-auto}.  Push out the extension representing $e_n$ along this automorphism.
Because the two scalar actions on Ext agree for commutative rings, the new extension class is
\[
 re_n=ce_n=de_n=e_m.
\]
Pushing out along an automorphism of the kernel does not change the isomorphism type of the
middle module.  Since two Yoneda extensions representing the same class are equivalent, the
middle terms are isomorphic.  Hence $I_n\cong I_m$.

Thus all ideals in the tail beginning at $N$ are mutually isomorphic, as required.
\end{proof}

\begin{corollary}\label{cor:sufficiency}
Under the hypotheses of \cref{thm:intro-main}, if $\mathfrak p$ has finite length, then $R$ is
iso-Artinian.
\end{corollary}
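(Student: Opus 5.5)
To deduce the corollary from \cref{thm:extension-stabilization}, I will show that every descending chain of ideals satisfies its hypothesis. Let $I_1\supseteq I_2\supseteq\cdots$ be a descending chain of ideals of $R$. I will consider the intersections $J_n:=I_n\cap\mathfrak p$. These form a descending chain of submodules of $\mathfrak p$.

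Since $\mathfrak p$ has finite length, it is an Artinian $R$-module. Hence the chain $(J_n)$ stabilizes: there is $n_0$ with $J_n=J_{n_0}$ for all $n\ge n_0$. Put $J:=J_{n_0}$. As a submodule of the finite-length module $\mathfrak p$, $J$ itself has finite length.

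The remaining hypotheses of \cref{thm:extension-stabilization} are given by \cref{thm:intro-main}: $R$ is Noetherian, has the unique minimal prime $\mathfrak p$, and $D=R/\mathfrak p$ is a PID. The theorem therefore applies to the chain, and it shows that $(I_n)_{n\ge n_0}$ is eventually constant up to isomorphism. So there is $N$ with $I_n\cong I_N$ for all $n\ge N$. This is exactly the iso-Artinian condition for the given chain, and since the chain was arbitrary, $R$ is iso-Artinian.

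There is no real obstacle, because all the substantive work is in \cref{thm:extension-stabilization}. The only point needing care is the observation that finite length implies the descending chain condition on submodules. This is what lets us take the stable value $J$ literally equal to $I_n\cap\mathfrak p$ for all large $n$, rather than merely isomorphic to it.
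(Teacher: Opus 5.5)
Your proposal is correct and is essentially the paper's own proof. You intersect the chain with $\mathfrak p$ and use the descending chain condition on the finite-length module $\mathfrak p$ to make $I_n\cap\mathfrak p$ literally constant and of finite length. Then you apply \cref{thm:extension-stabilization} to get eventual isomorphism of the $I_n$.
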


\begin{proof}
Let $I_1\supseteq I_2\supseteq\cdots$ be a descending chain of ideals.  The modules
$I_n\cap\mathfrak p$ form a descending chain of submodules of the finite-length module
$\mathfrak p$, so they eventually stabilize.  Apply \cref{thm:extension-stabilization}.
\end{proof}

\section{Generic nilpotence produces a non-isomorphic ideal chain}

We now prove the converse.  The argument is constructive and is the main technical point of
the paper.

\begin{theorem}\label{thm:generic-obstruction}
Let $R$ be a one-dimensional Noetherian ring with unique minimal prime $\mathfrak p$, and
suppose that $D=R/\mathfrak p$ is a PID.  If
\[
 \mathfrak pR_{\mathfrak p}\ne0,
\]
then $R$ is not iso-Artinian.  More precisely, $R$ contains a descending chain of ideals whose
members are pairwise non-isomorphic eventually.
\end{theorem}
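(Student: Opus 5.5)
The plan is to find one explicit descending chain of ideals together with a numerical invariant of $R$-modules that is preserved by isomorphism and that strictly increases along the chain. Fix $t\in R$ whose image in $D$ is a nonzero nonunit; it exists because $D$ is a PID that is not a field. Then $t\notin\mathfrak p$. Since $\mathfrak pR_{\mathfrak p}\ne0$, the chain $\mathfrak pR_{\mathfrak p}\supseteq\mathfrak p^2R_{\mathfrak p}\supseteq\cdots$ reaches $0$ in the Artinian local ring $R_{\mathfrak p}$. So we may choose $k\ge1$ and $x\in\mathfrak p^k$ with $x/1\ne0$ but $\mathfrak p x R_{\mathfrak p}=0$. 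We then set
\[
 I_n=t^nR+xR\qquad(n\ge1),
\]
which is clearly a descending chain. The model case is $R=k[t,x]/(x^2)$ with $I_n=(t^n,x)$.

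\textbf{Invariant.} For an $R$-module $M$, let $T(M)=\ker(M\to M_{\mathfrak p})$ and $M'=\{m\in M:\mathfrak p m\subseteq T(M)\}$. Both are defined intrinsically, so an isomorphism $M\cong N$ carries $M'$ onto $N'$ and $\mathfrak pM$ onto $\mathfrak pN$. Hence $\lambda(M)=\len_R(M'/\mathfrak pM)$, when finite, is an isomorphism invariant. The goal is to show that $\lambda(I_n)$ is finite and that $\lambda(I_n)\to\infty$; after passing to a subsequence this gives pairwise non-isomorphic members, which is what the theorem claims. In the model case $I_n'=xR$ and $\mathfrak pI_n=xt^nR$, so $\lambda(I_n)=\len(k[t]/t^n)=n$.

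\textbf{Order of steps.}
\begin{enumerate}
\item Compute $I_n'$. An element $m=at^n+bx$ lies in $I_n'$ exactly when $\mathfrak p at^n$ becomes zero in $R_{\mathfrak p}$, using $\mathfrak p x R_{\mathfrak p}=0$. Since $t$ is a unit in $R_{\mathfrak p}$, this says $a$ lies in the ideal $\mathfrak a=\{a:\mathfrak p a\subseteq T(R)\}$, which does not depend on $n$. Hence
\[
 I_n'=t^n\mathfrak a+xR .
\]
Note that $\mathfrak a\subseteq\mathfrak p$ and $\mathfrak a\supseteq\mathfrak p^{e}$, where $e$ is the nilpotency index of $\mathfrak pR_{\mathfrak p}$, minus one.
\item Compute $\mathfrak pI_n=t^n\mathfrak p+x\mathfrak p$.
\item Show that $\lambda(I_n)=\len\bigl((t^n\mathfrak a+xR)/(t^n\mathfrak p+x\mathfrak p)\bigr)$ is finite. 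The quotient is killed by $\mathfrak p$ modulo a $T$-torsion piece. I would check that it is a finitely generated $D$-module whose localization at $(0)$ vanishes; by Lemma~\ref{lem:finite-length-support} it then has finite length.
\item Bound $\lambda(I_n)$ from below. The quotient surjects onto $(xR+\mathfrak pI_n)/\mathfrak pI_n$, which contains $xR/(xR\cap(t^n\mathfrak p+x\mathfrak p))$. The element $x$ has $D$-torsion-free image modulo $x\mathfrak p+T$, because $x/1\ne0$ and $\mathfrak p x$ is torsion. One should therefore get $xR/(xR\cap\mathfrak pI_n)$ surjecting onto something like $D/t^nD$ or $D/t^{n-c}D$ for a constant $c$. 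Its length is at least $n-c$, which tends to infinity.
\end{enumerate}

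\textbf{Main obstacle.} I expect the hard part to be step 4 in general, when $\mathfrak p^2\ne0$ and $T(R)\ne0$. Here one must control $xR\cap(t^n\mathfrak p+x\mathfrak p)$, and the difficulty is ruling out that a relation $x(1-u)=t^n p'$ with $u\in\mathfrak p$ keeps the length bounded. My first attempt is to localize at a maximal ideal $\mathfrak m\ni t$ and use Artin--Rees for $t^n\mathfrak p\cap xR$. This gives $t^n\mathfrak p\cap xR\subseteq t^{n-c}xR$ for a fixed $c$. Since $1-u$ is a unit, this yields
\[
 xR/(xR\cap\mathfrak pI_n)\twoheadrightarrow xR/\bigl(t^{n-c}xR+x\mathfrak p\bigr).
\]
The right-hand side is a cyclic $D$-module $D/(t^{n-c})$ twisted by the torsion-free image of $x$, so its length grows linearly in $n$. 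If some ideals still coincide up to isomorphism, the strictly unbounded invariant lets us pass to a subsequence of the $I_n$, which is again a descending chain, whose members are pairwise non-isomorphic.
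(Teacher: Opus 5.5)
Your invariant is not well defined in the generality the theorem requires. The quotient $M'/\mathfrak pM$ needs $\mathfrak pM\subseteq M'$, i.e.\ $\mathfrak p^2M\subseteq T(M)$. For your ideals this fails exactly when $\mathfrak p^2R_{\mathfrak p}\neq0$, because $\mathfrak p\cdot t^n\mathfrak p\subseteq T(R)$ would force $\mathfrak p^2R_{\mathfrak p}=0$. Concretely, take $R=k[t,X]/(X^3)$. Then $\mathfrak p=XR$, $T(R)=0$, your $x$ is $X^2$, $\mathfrak a=(0:_R\mathfrak p)=X^2R$, and $I_n'=X^2R$, while $\mathfrak pI_n=t^nXR\not\subseteq X^2R$. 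So the module $(t^n\mathfrak a+xR)/(t^n\mathfrak p+x\mathfrak p)$ in Step~3 does not exist. Steps~3--4 therefore have nothing to rest on once $\mathfrak pR_{\mathfrak p}$ has nilpotency index at least $3$.

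The repair is to use $\lambda(M)=\len\bigl((M'+\mathfrak pM)/\mathfrak pM\bigr)$, which is still intrinsic. For your $I_n$ it coincides with the $D$-torsion submodule of $I_n/\mathfrak pI_n$, since $I_n\cap\mathfrak p=t^n\mathfrak p+xR$ and $I_n/(I_n\cap\mathfrak p)\cong t^nD$ is free. This is exactly the paper's invariant $K_n$.

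Since $\mathfrak a\subseteq\mathfrak p$, you get $I_n'+\mathfrak pI_n=xR+t^n\mathfrak p$. Hence $\lambda(I_n)=\len(R/\mathfrak c_n)$ with $\mathfrak c_n=(t^n\mathfrak p+x\mathfrak p:_Rx)\supseteq\mathfrak p+t^nR$, which gives finiteness.

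For the lower bound, the point is not that $1-u$ is a unit but that $\Ann_R(x)\subseteq\mathfrak p$, which holds because $x/1\neq0$. Suppose $rx=t^np'+xu$ with $u\in\mathfrak p$. Then $(r-u)x\in t^n\mathfrak p\cap xR\subseteq t^{n-c}xR$ by Artin--Rees, applied globally to $xR\subseteq\mathfrak p$ and the ideal $tR$. So $r\in\mathfrak p+t^{n-c}R+\Ann_R(x)=\mathfrak p+t^{n-c}R$, and therefore $\lambda(I_n)\geq\len(D/t^{n-c}D)\geq n-c$.

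With this fix your route is complete and genuinely simpler than the paper's. Using a single element $x$ instead of the lattice $L_n\subseteq Q=\mathfrak p\cap(0:_R\mathfrak p)$ makes the torsion part cyclic. You thereby avoid the structure theorem for $Q$, the $T_\pi\oplus T'$ splitting, and elementary divisors over $D_{(\pi)}$. The price is that you only get bounds $n-c\le\lambda(I_n)\le n\,\len(D/tD)$ rather than an exact formula, which suffices after passing to a subsequence.
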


\begin{proof}
Set
\begin{equation}\label{eq:Qdef}
 Q:=\mathfrak p\cap(0:_R\mathfrak p).
\end{equation}
Then $Q$ is a finitely generated ideal, $\mathfrak pQ=0$, and therefore $Q$ is naturally a
finitely generated $D$-module.

We first show that $Q$ has positive $D$-rank.  Put
\[
 A=R_{\mathfrak p},\qquad \mathfrak n=\mathfrak pR_{\mathfrak p}.
\]
The ring $A$ is zero-dimensional Noetherian local, hence Artinian, and by assumption
$\mathfrak n\ne0$.  Choose $t\ge2$ minimal with $\mathfrak n^t=0$.  Then
$0\ne\mathfrak n^{t-1}\subseteq\mathfrak n\cap(0:_A\mathfrak n)$.  Since localization
commutes with the annihilator of the finitely generated ideal $\mathfrak p$,
\[
 Q_{\mathfrak p}
 =\mathfrak n\cap(0:_A\mathfrak n)\ne0.
\]
As $Q$ is a $D$-module and localization at $\mathfrak p$ corresponds to tensoring with
$K=\Frac(D)$, we have $Q\otimes_DK\ne0$.  Hence
\[
 r:=\rank_DQ>0.
\]
Write, using the structure theorem over a PID,
\begin{equation}\label{eq:Qdecomp}
 Q\cong D^r\oplus T.
\end{equation}

Choose a prime element $\pi\in D$ and a lift $a\in R$.  Decompose the torsion part as
\[
 T=T_\pi\oplus T',
\]
where $T_\pi$ is the $\pi$-primary component and $T'$ is the sum of the primary components
corresponding to primes non-associate to $\pi$.  Choose $e\ge1$ such that
$\pi^eT_\pi=0$.  Multiplication by $\pi$ is an automorphism of $T'$.

For $n\ge e$, define the $D$-submodule
\begin{equation}\label{eq:Ln}
 L_n:=\pi^nD^r\oplus T'\subseteq Q
\end{equation}
and the ideal
\begin{equation}\label{eq:Jn-general}
 J_n:=a^{2n}R+L_n.
\end{equation}
Because $\mathfrak pQ=0$, every $D$-submodule of $Q$ is an $R$-submodule, hence an ideal.
Thus $J_n$ is an ideal and
\[
 J_e\supseteq J_{e+1}\supseteq J_{e+2}\supseteq\cdots.
\]

We shall distinguish these ideals by the $D$-modules
\[
 V_n:=J_n/\mathfrak pJ_n.
\]
Since $\mathfrak pL_n=0$,
\begin{equation}\label{eq:pJn}
 \mathfrak pJ_n=a^{2n}\mathfrak p.
\end{equation}
Moreover,
\begin{equation}\label{eq:intersection-basic}
 a^{2n}R\cap\mathfrak p=a^{2n}\mathfrak p.
\end{equation}
Indeed, if $a^{2n}x\in\mathfrak p$, then in the domain $D$ we have
$\pi^{2n}(x+\mathfrak p)=0$, so $x\in\mathfrak p$.

Because $L_n\subseteq\mathfrak p$, we also have
\[
 J_n\cap\mathfrak p=a^{2n}\mathfrak p+L_n.
\]
Indeed, if $a^{2n}x+\ell\in\mathfrak p$ with $\ell\in L_n$, then
$a^{2n}x\in\mathfrak p$, so $x\in\mathfrak p$ by \eqref{eq:intersection-basic}.
Projection modulo $\mathfrak p$ therefore yields an exact sequence of $D$-modules
\begin{equation}\label{eq:Vn-exact}
 0\longrightarrow K_n\longrightarrow V_n\longrightarrow \pi^{2n}D\longrightarrow0,
\end{equation}
where
\begin{equation}\label{eq:Kn}
 K_n
 \cong
 \frac{a^{2n}\mathfrak p+L_n}{a^{2n}\mathfrak p}
 \cong
 \frac{L_n}{L_n\cap a^{2n}\mathfrak p}.
\end{equation}
Since $\pi^{2n}D\cong D$ is projective, \eqref{eq:Vn-exact} splits:
\begin{equation}\label{eq:Vn-split}
 V_n\cong D\oplus K_n.
\end{equation}
The module $K_n$ is torsion.  Indeed, for $n\ge e$,
\[
 a^{2n}Q
 =\pi^{2n}D^r\oplus T'
 \subseteq L_n\cap a^{2n}\mathfrak p,
\]
and $a^{2n}Q$ has the same $D$-rank $r$ as $L_n$.

It remains to prove that the torsion modules $K_n$ are eventually pairwise non-isomorphic.
Let
\[
 \mathfrak m:=\{x\in R:x+\mathfrak p\in\pi D\},
 \qquad
 V:=D_{(\pi)}.
\]
Thus $V$ is a DVR with uniformizer $\pi$.  Apply the Artin--Rees lemma to the inclusion
$Q\subseteq\mathfrak p$ and the ideal $(a)$.  There is $c\ge0$ such that, for every $N\ge c$,
\begin{equation}\label{eq:AR}
 Q\cap a^N\mathfrak p
 =a^{N-c}(Q\cap a^c\mathfrak p).
\end{equation}
Set $H=Q\cap a^c\mathfrak p$ and localize at $\mathfrak m$.  In view of
\eqref{eq:Qdecomp},
\[
 Q_{\mathfrak m}\cong V^r\oplus (T_\pi)_{(\pi)}.
\]
Let $\overline H$ be the image of $H_{\mathfrak m}$ in the free quotient $V^r$.  Since
$a^cQ\subseteq H$, the lattice $\overline H$ has full rank $r$.  By the elementary divisor
theorem over the DVR $V$, after changing a basis of $V^r$ there exist integers
$b_1,\ldots,b_r\ge0$ such that
\begin{equation}\label{eq:H-lattice}
 \overline H=\pi^{b_1}Ve_1\oplus\cdots\oplus\pi^{b_r}Ve_r.
\end{equation}

Let $W=(T_\pi)_{(\pi)}$, the torsion summand of
$Q_{\mathfrak m}\cong V^r\oplus W$.  Since $W$ has finite length over the DVR $V$,
there is $u\ge 0$ with $\pi^uW=0$.  Choose $n$ so large that $2n-c\ge u$.
For every $h=(v,w)\in H_{\mathfrak m}\subseteq V^r\oplus W$ we then have
$\pi^{2n-c}h=(\pi^{2n-c}v,0)$.  Consequently
\[
 \pi^{2n-c}H_{\mathfrak m}=\pi^{2n-c}\overline H
 =\bigoplus_{i=1}^r\pi^{2n-c+b_i}Ve_i.
\]
Using \eqref{eq:AR}, this gives
\[
 (Q\cap a^{2n}\mathfrak p)_{\mathfrak m}
 =\bigoplus_{i=1}^r\pi^{2n-c+b_i}Ve_i.
\]
On the other hand, $T'$ vanishes after localization at $(\pi)$, and hence
\[
 (L_n)_{\mathfrak m}=\pi^nV^r.
\]
For all sufficiently large $n$ we have $2n-c+b_i\ge n$ for every $i$.  Hence
$(Q\cap a^{2n}\mathfrak p)_{\mathfrak m}\subseteq(L_n)_{\mathfrak m}$, and therefore
\[
 (L_n\cap a^{2n}\mathfrak p)_{\mathfrak m}
 =(Q\cap a^{2n}\mathfrak p)_{\mathfrak m}.
\]
Consequently
\begin{align}
 (K_n)_{(\pi)}
 &\cong
 \frac{\pi^nV^r}
 {\bigoplus_{i=1}^r\pi^{2n-c+b_i}Ve_i},\label{eq:Kn-local}\\
 \len_V((K_n)_{(\pi)})
 &=rn+\sum_{i=1}^r(b_i-c).\label{eq:length-growth}
\end{align}
Since $r>0$, the length in \eqref{eq:length-growth} is strictly increasing with $n$.

Finally, suppose $J_n\cong J_m$ as $R$-modules.  Every $R$-isomorphism sends
$\mathfrak pJ_n$ onto $\mathfrak pJ_m$, so it induces a $D$-isomorphism
$V_n\cong V_m$.  By \eqref{eq:Vn-split}, the torsion submodule of $V_n$ is exactly $K_n$.
Therefore $K_n\cong K_m$, contradicting \eqref{eq:length-growth} when $n\ne m$ are
sufficiently large.  Thus the chain \eqref{eq:Jn-general} does not terminate up to isomorphism,
and $R$ is not iso-Artinian.
\end{proof}

\section{The complete characterization}

We can now prove the main theorem announced in the introduction.

\begin{theorem}\label{thm:complete}
Let $R$ be a one-dimensional Noetherian ring with unique minimal prime $\mathfrak p$, and
assume that $D=R/\mathfrak p$ is a PID.  Then the following are equivalent:
\begin{enumerate}[label=\textup{(\roman*)}]
\item $R$ is iso-Artinian;
\item $R$ is generically reduced, i.e. $R_{\mathfrak p}$ is a field;
\item $\mathfrak pR_{\mathfrak p}=0$;
\item $\mathfrak p$ has finite length;
\item $\mathfrak p/\mathfrak p^2$ is a torsion $D$-module;
\item $\rank_D(\mathfrak p/\mathfrak p^2)=0$;
\item there exists $s\in R\setminus\mathfrak p$ with $s\mathfrak p=0$.
\end{enumerate}
\end{theorem}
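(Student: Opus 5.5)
This theorem is an assembly of results already proved, so the plan is to arrange the implications so that each arrow cites one earlier statement. First, the intrinsic conditions (iii)--(vii) are exactly conditions (i)--(v) of \cref{prop:intrinsic-equivalences}. Under the relabelling: (iii) here is (i) there, (iv) is (ii), (vii) is (iii), (v) is (iv), and (vi) is (v). That proposition therefore gives the mutual equivalence of (iii)--(vii) with no further work.

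Next I will link (ii) with (iii). The ring $R_{\mathfrak p}$ is local with maximal ideal $\mathfrak pR_{\mathfrak p}$, because $\mathfrak p$ is a prime of $R$. A local ring is a field precisely when its maximal ideal is zero. Hence (ii)$\Leftrightarrow$(iii) is immediate. As a side remark, matching \cref{rem:geometry}: since $\sqrt{0}=\mathfrak p$, the condition $\mathfrak pR_{\mathfrak p}=0$ also says that $R_{\mathfrak p}$ is reduced. Either way, the phrase ``generically reduced'' is just a name for (iii).

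It remains to connect (i) with the intrinsic conditions, and both directions are already available. For (iv)$\Rightarrow$(i), I will invoke \cref{cor:sufficiency}. Its hypotheses, namely that $R$ is one-dimensional Noetherian with unique minimal prime $\mathfrak p$, $D$ a PID, and $\mathfrak p$ of finite length, are exactly those of the present statement together with (iv). It rests on \cref{thm:extension-stabilization}, applied after the chain $I_n\cap\mathfrak p$ stabilizes inside the finite-length module $\mathfrak p$. For (i)$\Rightarrow$(iii), I will argue by contraposition using \cref{thm:generic-obstruction}. If $\mathfrak pR_{\mathfrak p}\neq 0$, that theorem produces a descending chain of ideals whose members are eventually pairwise non-isomorphic, so $R$ is not iso-Artinian.

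Combining these gives (i)$\Rightarrow$(iii)$\Leftrightarrow$(iv)$\Rightarrow$(i), together with the equivalences of (ii), (v), (vi) and (vii) established above. The final ``consequently'' clause of \cref{thm:intro-main} follows by negating (i)$\Leftrightarrow$(vi), since the rank is a nonnegative integer. I do not expect any real obstacle at this stage, because the substantive work sits in \cref{thm:extension-stabilization} and \cref{thm:generic-obstruction}. The only point needing care is bookkeeping: the hypotheses of each cited result should match the present ones exactly, in particular $\dim R=1$ for \cref{thm:generic-obstruction}, and the relabelling of the conditions of \cref{prop:intrinsic-equivalences} should be tracked correctly.
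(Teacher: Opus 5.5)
Your proposal is correct and follows the paper's proof exactly. You get (ii)$\Leftrightarrow$(iii) because $R_{\mathfrak p}$ is local with maximal ideal $\mathfrak pR_{\mathfrak p}$, (iii)--(vii) from \cref{prop:intrinsic-equivalences} (your relabelling is right), (iv)$\Rightarrow$(i) from \cref{cor:sufficiency}, and (i)$\Rightarrow$(iii) as the contrapositive of \cref{thm:generic-obstruction}.
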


\begin{proof}
The equivalence of (ii) and (iii) is immediate because $R_{\mathfrak p}$ is local with maximal
ideal $\mathfrak pR_{\mathfrak p}$ and residue field $\Frac(D)$.  The equivalence of
(iii)--(vii) is Proposition~\ref{prop:intrinsic-equivalences}.  Condition (iv) implies (i) by
Corollary~\ref{cor:sufficiency}, while the contrapositive of (i)$\Rightarrow$(iii) is
\cref{thm:generic-obstruction}.
\end{proof}

\begin{corollary}[Exact negative criterion]\label{cor:negative}
Under the hypotheses of \cref{thm:complete},
\[
 R\text{ is not iso-Artinian}
 \quad\Longleftrightarrow\quad
 \rank_D(\mathfrak p/\mathfrak p^2)>0.
\]
Whenever this rank is positive, the proof of \cref{thm:generic-obstruction} gives an explicit
descending chain of eventually pairwise non-isomorphic ideals.
\end{corollary}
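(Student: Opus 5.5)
The corollary follows at once from \cref{thm:complete} together with the construction in \cref{thm:generic-obstruction}; no new argument is needed.

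For the forward implication, suppose $R$ is not iso-Artinian. By \cref{thm:complete}, the failure of (i) is equivalent to the failure of (vi), that is, $\rank_D(\mathfrak p/\mathfrak p^2)\neq 0$. A rank is a nonnegative integer, since $\mathfrak p/\mathfrak p^2$ is a finitely generated module over the domain $D$. Hence the rank is positive.

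For the converse, suppose $\rank_D(\mathfrak p/\mathfrak p^2)>0$. Then condition (vi) of \cref{thm:complete} fails. By the equivalence of (iii) and (vi) in \cref{prop:intrinsic-equivalences}, this means $\mathfrak pR_{\mathfrak p}\neq 0$. We may now invoke \cref{thm:generic-obstruction} directly, rather than merely the equivalence (i)$\Leftrightarrow$(vi). That theorem produces the explicit chain
\[
 J_n=a^{2n}R+\pi^nD^r\oplus T'\qquad(n\ge e).
\]
Here $Q=\mathfrak p\cap(0:_R\mathfrak p)\cong D^r\oplus T$ has positive rank $r$, and $a$ lifts a prime element $\pi$ of $D$. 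Its proof shows that the torsion parts $K_n$ of $V_n=J_n/\mathfrak pJ_n$ have $\pi$-local length
\[
 rn+\sum_i(b_i-c),
\]
which is strictly increasing for large $n$. So the $J_n$ are eventually pairwise non-isomorphic, as the corollary asserts.

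The only point to check is that the rank hypothesis in the corollary, stated for $\mathfrak p/\mathfrak p^2$, matches the hypothesis used in the construction, which is the rank of $Q$. This is handled by passing through $\mathfrak pR_{\mathfrak p}\ne0$, which the proof of \cref{thm:generic-obstruction} already converts into $\rank_DQ>0$. So I expect no real obstacle: the argument is bookkeeping of the implications already established.
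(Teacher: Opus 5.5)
Your argument is correct and is exactly how the paper obtains this corollary: the equivalence follows from (i)$\Leftrightarrow$(vi) of \cref{thm:complete}, and once $\mathfrak pR_{\mathfrak p}\neq0$ the explicit chain $J_n=a^{2n}R+L_n$ comes from the proof of \cref{thm:generic-obstruction}. One small labeling slip: the equivalence $\mathfrak pR_{\mathfrak p}=0\Leftrightarrow\rank_D(\mathfrak p/\mathfrak p^2)=0$ is (i)$\Leftrightarrow$(v) in \cref{prop:intrinsic-equivalences}; it becomes (iii)$\Leftrightarrow$(vi) only in the numbering of \cref{thm:complete}.
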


\begin{remark}\label{rem:conormal}
The conormal module $\mathfrak p/\mathfrak p^2$ is therefore the exact general replacement for
the module occurring in a square-zero idealization.  No splitting of $R\to D$ is required.
\end{remark}

\section{Daneshvar and Divaani-Aazar's Question}

We now specialize the characterization to the setting that motivated the problem.

\begin{theorem}[Complete answer to Question~3.9]\label{thm:q39}
Let $R$ be a one-dimensional Noetherian ring with unique minimal prime $\mathfrak p$, suppose
that $D=R/\mathfrak p$ is a PID, and assume
\[
 \Min R\subsetneq\Ass R.
\]
Then
\[
 R\text{ is iso-Artinian}
 \quad\Longleftrightarrow\quad
 \len_R(\mathfrak p)<\infty
 \quad\Longleftrightarrow\quad
 \rank_D(\mathfrak p/\mathfrak p^2)=0.
\]
Equivalently, $R$ is non-iso-Artinian exactly when
$\rank_D(\mathfrak p/\mathfrak p^2)>0$.
\end{theorem}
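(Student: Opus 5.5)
This is a direct specialization of \cref{thm:complete}, so the plan is to check that the extra hypothesis $\Min R\subsetneq\Ass R$ changes nothing. Everything else in the statement is already covered there.

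First, the hypotheses of \cref{thm:q39} include all those of \cref{thm:complete}: $R$ is one-dimensional Noetherian with unique minimal prime $\mathfrak p$, and $D=R/\mathfrak p$ is a PID. Applying \cref{thm:complete}, condition (i) (iso-Artinian) is equivalent to (iv) ($\mathfrak p$ has finite length, i.e.\ $\len_R(\mathfrak p)<\infty$). It is also equivalent to (vi) ($\rank_D(\mathfrak p/\mathfrak p^2)=0$). This gives both displayed equivalences at once. The final sentence, that $R$ is non-iso-Artinian exactly when the rank is positive, is the negation of the equivalence of (i) and (vi), which is \cref{cor:negative}.

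It is worth recording why $\Min R\subsetneq\Ass R$ is harmless, and what it adds. It says that some maximal ideal $\mathfrak m$ is associated, i.e.\ $R$ has nonzero $\mathfrak m$-torsion of finite length. This imposes no constraint on $\mathfrak pR_{\mathfrak p}$, so it neither forces nor excludes either side of the dichotomy. In the iso-Artinian case it only guarantees that $\mathfrak p\neq0$, which is irrelevant to the equivalences. One could optionally remark that both cases occur even when $D$ is a DVR. For example, $D[x]/(x^2)$ with an embedded component has positive rank. A finite-length thickening such as $D\ltimes(D/\pi D)$ has rank zero and $\mathfrak m=(\pi)+\mathfrak p\in\Ass R$. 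Such examples are not needed for the proof.

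There is no real obstacle; the only step needing care is confirming that the hypotheses match those of \cref{thm:complete} exactly, so that no additional argument is required.
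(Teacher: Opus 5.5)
Your proposal is correct and is exactly the paper's argument: the statement is read off from \cref{thm:complete} via (i)$\Leftrightarrow$(iv)$\Leftrightarrow$(vi), and the hypothesis $\Min R\subsetneq\Ass R$ plays no role in the equivalences. One small slip in your optional aside, which does not affect the proof: $D[x]/(x^2)$ is free over $D$ and so has no embedded prime, so a positive-rank example with an embedded maximal prime should instead be something like $D\ltimes(D\oplus D/\pi D)$, as in \cref{cor:q39-negative-exists}.
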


\begin{proof}
This is the specialization of \cref{thm:complete}.  Notice that the strict inclusion of
associated primes is not needed for the equivalence itself; it identifies precisely the case
asked about in \cite[Question~3.9]{DaneshvarDivaaniAazar2025}.
\end{proof}

In the positive case the associated primes admit a particularly clean description.

\begin{proposition}\label{prop:ass-positive}
Assume the hypotheses of \cref{thm:complete} and suppose that $R$ is iso-Artinian.  Then
\[
 \Ass_RR=\{\mathfrak p\}\cup\Supp_R(\mathfrak p).
\]
In particular, if $\mathfrak p\ne0$, then
$\Min R\subsetneq\Ass R$ automatically.
\end{proposition}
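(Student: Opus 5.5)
By \cref{thm:complete}, the assumption that $R$ is iso-Artinian lets me work with the equivalent conditions $\mathfrak pR_{\mathfrak p}=0$, $\len_R(\mathfrak p)<\infty$, and the existence of $s\in R\setminus\mathfrak p$ with $s\mathfrak p=0$. The plan is to prove the two inclusions separately. One key fact is used on both sides: since $\dim R=1$ and $\mathfrak p$ is the unique minimal prime, every prime of $R$ other than $\mathfrak p$ is maximal.

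For $\supseteq$, I first treat $\mathfrak p$ itself. It is minimal, and minimal primes of a Noetherian ring are always associated, so $\mathfrak p\in\Ass_RR$. Next let $\mathfrak m\in\Supp_R(\mathfrak p)$. By \cref{lem:finite-length-support} the module $\mathfrak p$ has finite length, so $\mathfrak m$ is maximal. A nonzero finite-length module has only maximal ideals as associated primes, and these coincide with its support. Hence $\mathfrak m\in\Ass_R(\mathfrak p)$. Because $\mathfrak p\subseteq R$, we get $\Ass_R(\mathfrak p)\subseteq\Ass_RR$, and so $\mathfrak m\in\Ass_RR$.

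For $\subseteq$, take $\mathfrak q\in\Ass_RR$ with $\mathfrak q\ne\mathfrak p$; then $\mathfrak q$ is maximal. Write $\mathfrak q=\Ann_R(x)$ with $x\in R$. I consider two cases.
\begin{itemize}
\item If $x\in\mathfrak p$, then $\mathfrak q\in\Ass_R(\mathfrak p)\subseteq\Supp_R(\mathfrak p)$ and we are done.
\item If $x\notin\mathfrak p$, the goal is a contradiction. Let $\bar x$ be the nonzero image of $x$ in the domain $D$. For $y\in\mathfrak q$ we have $yx=0\in\mathfrak p$, so $\bar y\bar x=0$ in $D$, hence $\bar y=0$ and $y\in\mathfrak p$. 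This gives $\mathfrak q\subseteq\mathfrak p$, so $\mathfrak q=\mathfrak p$, contradicting the choice of $\mathfrak q$.
\end{itemize}
Thus every associated prime other than $\mathfrak p$ lies in $\Supp_R(\mathfrak p)$. I expect this case split on whether the witness $x$ lies in $\mathfrak p$ to be the only real point of the argument. It uses nothing beyond the fact that $D$ is a domain.

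For the final assertion, suppose $\mathfrak p\ne0$. Then $\Supp_R(\mathfrak p)$ is nonempty and consists of maximal ideals. These differ from $\mathfrak p$, because $D$ is not a field and so $\mathfrak p$ is not maximal. Since $\Min R=\{\mathfrak p\}$, the displayed equality gives $\Min R\subsetneq\Ass R$.
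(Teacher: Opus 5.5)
Your proof is correct. The inclusion $\supseteq$ and the final assertion follow the paper's argument: $\mathfrak p$ has finite length by \cref{thm:complete}, so every point of $\Supp_R(\mathfrak p)$ is maximal, hence minimal in the support, hence in $\Ass_R(\mathfrak p)\subseteq\Ass_RR$. One small attribution point: the finite length itself comes from \cref{thm:complete}, while \cref{lem:finite-length-support} only turns it into maximality of the support.

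For $\subseteq$ you take a genuinely different route. The paper localizes. If a maximal $\mathfrak m\in\Ass_RR$ lay outside $\Supp_R(\mathfrak p)$, then $\mathfrak pR_{\mathfrak m}=0$, so $R_{\mathfrak m}\cong D_{\mathfrak m/\mathfrak p}$ would be a DVR with no nonzero associated prime, contradicting $\mathfrak mR_{\mathfrak m}\in\Ass(R_{\mathfrak m})$. You instead take a witness $x$ with $\mathfrak q=\Ann_R(x)$ and split on whether $x\in\mathfrak p$. This is the standard proof of $\Ass_RR\subseteq\Ass_R(\mathfrak p)\cup\Ass_R(R/\mathfrak p)=\Ass_R(\mathfrak p)\cup\{\mathfrak p\}$ for the sequence $0\to\mathfrak p\to R\to D\to0$.

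Your version avoids localizing associated primes. It also gives the slightly sharper inclusion $\Ass_RR\subseteq\{\mathfrak p\}\cup\Ass_R(\mathfrak p)$, using only that $\mathfrak p$ is a minimal prime, with no dimension, PID, or iso-Artinian hypothesis. The paper's version instead shows the geometric reason: away from $\Supp_R(\mathfrak p)$ the ring is locally the domain $D$, so it has no embedded primes there. In both arguments the finite-length hypothesis is needed only for $\supseteq$.
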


\begin{proof}
By \cref{thm:complete}, $\mathfrak p$ has finite length.  Since $R$ is Noetherian, the minimal
prime $\mathfrak p$ belongs to $\Ass_RR$.

Let $\mathfrak m\in\Supp_R(\mathfrak p)$.  The finite-length module $\mathfrak p$ has
zero-dimensional support, so every member of its support is minimal in that support and hence
belongs to $\Ass_R(\mathfrak p)$.  Since $\mathfrak p$ is a submodule of $R$,
$\Ass_R(\mathfrak p)\subseteq\Ass_RR$.  This proves
\[
 \{\mathfrak p\}\cup\Supp_R(\mathfrak p)\subseteq\Ass_RR.
\]

Conversely, let $\mathfrak m\in\Ass_RR$ with $\mathfrak m\ne\mathfrak p$.  Then
$\mathfrak m$ is maximal.  If $\mathfrak m\notin\Supp_R(\mathfrak p)$, then
$\mathfrak pR_{\mathfrak m}=0$, so
\[
 R_{\mathfrak m}\cong D_{\mathfrak m/\mathfrak p}
\]
is a DVR.  Its only associated prime is zero.  But localization of associated primes would give
$\mathfrak mR_{\mathfrak m}\in\Ass(R_{\mathfrak m})$, a contradiction.  Hence
$\mathfrak m\in\Supp_R(\mathfrak p)$.

If $\mathfrak p\ne0$, its finite-length support is nonempty, so there is at least one embedded
maximal associated prime.  Thus $\Min R\subsetneq\Ass R$.
\end{proof}

\begin{corollary}\label{cor:q39-negative-exists}
 Daneshvar and Divaani-Aazar's Question~3.9 has a negative answer  among square-zero
extensions over DVRs.
\end{corollary}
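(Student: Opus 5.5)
We will exhibit an explicit square-zero extension (idealization) over a DVR that satisfies all the hypotheses of Question~3.9 but is not iso-Artinian; \cref{thm:complete} then does the work. Let $V$ be a DVR with uniformizer $\pi$ and fraction field $K$, for instance $V=k[[t]]$. Take the $V$-module
\[
 M=V\oplus V/\pi V
\]
and the Nagata idealization $R=V\ltimes M$, with multiplication $(a,m)(b,n)=(ab,an+bm)$.

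First we verify the hypotheses. $R$ is Noetherian, since it is a finitely generated module over the Noetherian ring $V$. The ideal $0\ltimes M$ squares to zero, so it lies in every prime. Since $R/(0\ltimes M)\cong V$, we get $\mathfrak p:=0\ltimes M$ as the unique minimal prime, $D=R/\mathfrak p\cong V$ is a PID, and $\dim R=\dim V=1$.

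Next we show $\Min R\subsetneq\Ass R$. Let $\mathfrak m=\pi V\ltimes M$ be the maximal ideal. The element $x=(0,(0,\bar1))$ is nonzero, and its annihilator is computed as follows: $(a,m)x=(0,(0,a\bar1))$, which vanishes exactly when $a\in\pi V$. Hence $\Ann_R(x)=\mathfrak m$, so $\mathfrak m\in\Ass R$, while $\mathfrak m\ne\mathfrak p$.

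Finally we show $R$ is not iso-Artinian. Since $\mathfrak p^2=0$, the conormal module is $\mathfrak p/\mathfrak p^2=M$, and $\rank_D M=1>0$. By \cref{cor:negative} (equivalently \cref{thm:generic-obstruction}, because $\mathfrak pR_{\mathfrak p}\supseteq M\otimes_VK\ne0$), $R$ is not iso-Artinian. This is a negative answer to Question~3.9.

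The only point needing care is that the embedded prime comes from the torsion summand $V/\pi V$ while the rank comes from the free summand $V$. With only $M=V$, $R=V\ltimes V$ would have $\Ass R=\{\mathfrak p\}$, because $V$ is torsion-free, and the hypothesis $\Min R\subsetneq\Ass R$ would fail. This is why we include both summands. If one prefers, the failure of iso-Artinianness can also be seen directly via the chain $J_n=(\pi^{2n},0)R+(0\ltimes\pi^nV)$ from the proof of \cref{thm:generic-obstruction}, but citing the theorem suffices.
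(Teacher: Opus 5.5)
Your proposal is correct and follows the paper's proof essentially step for step. It uses the same idealization $R=V\ltimes(V\oplus V/(\pi))$, the same witness $(0,(0,\overline{1}))$ with annihilator $(\pi)\ltimes M$ for the embedded prime, and the same appeal to the positive rank of $\mathfrak p/\mathfrak p^2\cong M$ via \cref{thm:complete}; the paper additionally writes out the direct chain $J_n=\pi^{2n}V\ltimes(\pi^nV\oplus 0)$, which you correctly mark as optional.
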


\begin{proof}
Let $V$ be a DVR with uniformizer $\pi$ and set
\[
 M=V\oplus V/(\pi),
 \qquad
 R=V\ltimes M.
\]
Then $R$ is a one-dimensional Noetherian ring with unique minimal prime
$\mathfrak p=0\ltimes M$ and $R/\mathfrak p\cong V$.  The element
$z=(0,(0,\overline1))\in R$ satisfies
\[
 \Ann_R(z)=(\pi)\ltimes M,
\]
so $\Min R\subsetneq\Ass R$.  On the other hand,
$\mathfrak p^2=0$ and
\[
 \mathfrak p/\mathfrak p^2\cong M
\]
has $V$-rank one.  Hence \cref{thm:complete} shows that $R$ is not iso-Artinian.

For completeness, the failure can also be seen directly.  For $n\ge1$ put
\[
 J_n=\pi^{2n}V\ltimes(\pi^nV\oplus0).
\]
These form a descending chain of ideals.  Since
$(0\ltimes M)J_n=0\ltimes(\pi^{2n}V\oplus0)$,
\[
 J_n/(0\ltimes M)J_n\cong V\oplus V/(\pi^n).
\]
The torsion submodule on the right has annihilator $(\pi^n)$, so these quotients, and hence the
ideals $J_n$, are pairwise non-isomorphic.  Thus $R$ is not iso-Artinian.
\end{proof}

\section{Applications}

The complete characterization has several useful forms.

\begin{corollary}\label{cor:dvr-local}
Let $(R,\mathfrak m)$ be a one-dimensional Noetherian local ring with unique minimal prime
$\mathfrak p$.  Assume that $D=R/\mathfrak p$ is a DVR, let $\pi$ be a uniformizer of $D$,
and choose $x\in\mathfrak m$ lifting $\pi$.  Then the following are equivalent:
\begin{enumerate}[label=\textup{(\roman*)}]
\item $R$ is iso-Artinian;
\item $\mathfrak p$ has finite length;
\item $x^N\mathfrak p=0$ for some $N\ge1$;
\item $\mathfrak p/\mathfrak p^2$ is a torsion $D$-module.
\end{enumerate}
\end{corollary}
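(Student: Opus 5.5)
The plan is to reduce everything to \cref{thm:complete} and Proposition~\ref{prop:intrinsic-equivalences}. The only new ingredient is condition (iii), which replaces the abstract element $s\in R\setminus\mathfrak p$ with $s\mathfrak p=0$ by the concrete power $x^N$. Since a DVR is a PID, all hypotheses of \cref{thm:complete} are in force. That theorem gives (i)$\Leftrightarrow$(ii)$\Leftrightarrow$(iv), together with the equivalence with the condition ``there exists $s\in R\setminus\mathfrak p$ with $s\mathfrak p=0$''. It therefore suffices to show that (iii) is equivalent to this last condition, or to (ii).

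For (iii)$\Rightarrow$(ii) and the $s$-condition: we have $x\notin\mathfrak p$, since its image $\pi$ in $D$ is nonzero. Because $\mathfrak p$ is prime, $x^N\notin\mathfrak p$, so $s=x^N$ works directly.

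For (ii)$\Rightarrow$(iii), assume $\mathfrak p$ has finite length. By Lemma~\ref{lem:finite-length-support} its support consists of maximal ideals, so it is contained in $\{\mathfrak m\}$ because $R$ is local. A finite-length module over a Noetherian local ring with support in $\{\mathfrak m\}$ is annihilated by some power $\mathfrak m^N$. This follows, for instance, from a composition series: each factor is $R/\mathfrak m$, so $\mathfrak m^{\ell}\mathfrak p=0$ with $\ell=\len_R(\mathfrak p)$. Since $x\in\mathfrak m$, we get $x^{\ell}\mathfrak p=0$.

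There is no real obstacle. The only point needing care is that $x^N\notin\mathfrak p$, which is exactly primeness of $\mathfrak p$ together with $\pi\neq0$, and that the composition factors of $\mathfrak p$ are all isomorphic to $R/\mathfrak m$ because $R$ is local. Alternatively, one could argue (iii) directly from the $s$-condition: if $s\mathfrak p=0$ with $s\notin\mathfrak p$, then $\bar s=u\pi^k$ in $D$ with $u$ a unit. Hence $s=vx^k+p_0$ with $v$ a unit of $R$ and $p_0\in\mathfrak p$. Using $s\mathfrak p=0$ and nilpotence of $\mathfrak p$, one would then extract a power of $x$ killing $\mathfrak p$. I would use the finite-length route, since it is cleaner.
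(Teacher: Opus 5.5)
Your proposal is correct and matches the paper's proof: you take (i)$\Leftrightarrow$(ii)$\Leftrightarrow$(iv) from \cref{thm:complete}, get (ii)$\Rightarrow$(iii) because a power of $\mathfrak m$ (hence a power of $x$) annihilates the finite-length module $\mathfrak p$, and get (iii)$\Rightarrow$(ii) by noting $x^N\notin\mathfrak p$ and invoking condition (vii) of \cref{thm:complete}. Your composition-series argument and the explicit appeal to the primeness of $\mathfrak p$ just fill in details the paper leaves implicit.
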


\begin{proof}
The equivalence of (i), (ii), and (iv) follows from \cref{thm:complete}.  If $\mathfrak p$ has
finite length, then a sufficiently large power of $\mathfrak m$ annihilates it, hence so does a
power of $x$.  Conversely, $x\notin\mathfrak p$, so $x^N\mathfrak p=0$ implies condition~(vii) of
\cref{thm:complete}.
\end{proof}

\begin{corollary}\label{cor:local-global}
Under the hypotheses of \cref{thm:complete}, the following are equivalent:
\begin{enumerate}[label=\textup{(\roman*)}]
\item $R$ is iso-Artinian;
\item $R_{\mathfrak m}$ is iso-Artinian for every maximal ideal $\mathfrak m$;
\item $R_{\mathfrak m}$ is iso-Artinian for at least one maximal ideal $\mathfrak m$.
\end{enumerate}
Thus, within this class, either every maximal localization is iso-Artinian or none is.
\end{corollary}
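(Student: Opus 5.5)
The plan is to transport the characterization of \cref{thm:complete} from $R$ to each localization $R_{\mathfrak m}$, and then to use that the condition $\mathfrak pR_{\mathfrak p}=0$ does not depend on $\mathfrak m$.

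\textbf{Step 1: the localizations satisfy the hypotheses of \cref{thm:complete}.} Fix a maximal ideal $\mathfrak m$ of $R$. By the bijection $\Spec R\to\Spec D$, $\mathfrak m$ contains $\mathfrak p$ and is not minimal, because $\dim R=1$. So $R_{\mathfrak m}$ is a Noetherian local ring of dimension one. Its unique minimal prime is $\mathfrak pR_{\mathfrak m}$, and
\[
R_{\mathfrak m}/\mathfrak pR_{\mathfrak m}\cong D_{\mathfrak m/\mathfrak p}.
\]
This is the localization of a PID at a nonzero prime, hence a DVR, and in particular a PID.

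\textbf{Step 2: identify the generic condition for $R_{\mathfrak m}$.} Localizing $R_{\mathfrak m}$ at its minimal prime $\mathfrak pR_{\mathfrak m}$ gives
\[
(R_{\mathfrak m})_{\mathfrak pR_{\mathfrak m}}\cong R_{\mathfrak p},
\]
since $\mathfrak p\subseteq\mathfrak m$. Therefore condition (iii) of \cref{thm:complete} for $R_{\mathfrak m}$ reads $\mathfrak pR_{\mathfrak p}=0$. This is literally the same condition as (iii) for $R$ itself.

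\textbf{Step 3: conclude.} Applying \cref{thm:complete} twice gives
\[
R_{\mathfrak m}\text{ is iso-Artinian}\iff \mathfrak pR_{\mathfrak p}=0\iff R\text{ is iso-Artinian}.
\]
This holds for every maximal ideal $\mathfrak m$. Hence (i)$\Rightarrow$(ii) and (iii)$\Rightarrow$(i) are immediate. (ii)$\Rightarrow$(iii) also holds, because $R$ has at least one maximal ideal.

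The only point needing care, and so the main obstacle, is Step 2: checking that localization at the minimal prime commutes with localization at $\mathfrak m$. This follows from transitivity of localization, because $R\setminus\mathfrak m\subseteq R\setminus\mathfrak p$.
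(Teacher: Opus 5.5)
Your proposal is correct and follows essentially the same route as the paper: both check that $R_{\mathfrak m}$ satisfies the hypotheses of \cref{thm:complete}, with $R_{\mathfrak m}/\mathfrak pR_{\mathfrak m}\cong D_{\mathfrak m/\mathfrak p}$ a DVR, and both use $(R_{\mathfrak m})_{\mathfrak pR_{\mathfrak m}}\cong R_{\mathfrak p}$ to see that the condition $\mathfrak pR_{\mathfrak p}=0$ is literally the same for $R$ and for every $R_{\mathfrak m}$. Your explicit check that $\dim R_{\mathfrak m}=1$, because $\mathfrak p$ cannot itself be maximal, supplies a detail the paper leaves implicit.
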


\begin{proof}
If $R$ is iso-Artinian, then $\mathfrak pR_{\mathfrak p}=0$ by \cref{thm:complete}.  For every
maximal $\mathfrak m$, the localized nilradical $\mathfrak pR_{\mathfrak m}$ has zero
localization at its minimal prime, so \cref{thm:complete} applied to $R_{\mathfrak m}$ shows
that $R_{\mathfrak m}$ is iso-Artinian.  Hence (i) implies (ii), and (ii) implies (iii).

Conversely, suppose $R_{\mathfrak m}$ is iso-Artinian for one maximal ideal
$\mathfrak m$.  Its unique minimal prime is $\mathfrak pR_{\mathfrak m}$ and
\[
 R_{\mathfrak m}/\mathfrak pR_{\mathfrak m}
 \cong D_{\mathfrak m/\mathfrak p},
\]
which is a DVR.  Applying \cref{thm:complete} to $R_{\mathfrak m}$ gives
\[
 (\mathfrak pR_{\mathfrak m})
 (R_{\mathfrak m})_{\mathfrak pR_{\mathfrak m}}=0.
\]
But
\[
 (R_{\mathfrak m})_{\mathfrak pR_{\mathfrak m}}\cong R_{\mathfrak p},
\]
so $\mathfrak pR_{\mathfrak p}=0$.  Hence $R$ is iso-Artinian by
\cref{thm:complete}.
\end{proof}

\begin{corollary}\label{cor:square-zero}
Let $R$ be a one-dimensional Noetherian ring with unique minimal prime $\mathfrak p$, assume
$\mathfrak p^2=0$, and suppose $D=R/\mathfrak p$ is a PID.  Then 
\[
 R\text{ is iso-Artinian}
 \quad\Longleftrightarrow\quad
 \mathfrak p\text{ is a torsion }D\text{-module}.
\]
No splitting of the square-zero extension $0\to\mathfrak p\to R\to D\to0$ is required.
\end{corollary}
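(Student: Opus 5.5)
This follows from \cref{thm:complete}, using the hypothesis $\mathfrak p^2=0$ to identify the conormal module with $\mathfrak p$ itself. Since $\mathfrak p^2=0$, the ideal $\mathfrak p$ is annihilated by $\mathfrak p$. It is therefore naturally a $D$-module, and $\mathfrak p/\mathfrak p^2=\mathfrak p$ as $D$-modules. The $D$-structure is the one induced from the $R$-structure, so no choice of section $D\to R$ is involved.

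Condition (v) of \cref{thm:complete} says that $\mathfrak p/\mathfrak p^2$ is a torsion $D$-module. Under the identification above, this becomes the statement that $\mathfrak p$ is a torsion $D$-module. \cref{thm:complete} gives the equivalence (i)$\Leftrightarrow$(v), which is exactly the claimed equivalence.

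The theorem makes no splitting assumption on $R\to D$, so the remark about splittings needs no separate argument. As a consistency check, the "torsion" condition can also be seen directly through (vii). Since $\mathfrak p$ is finitely generated over $D$, it is torsion exactly when a single nonzero $d\in D$ kills it. Any lift $s\in R\setminus\mathfrak p$ of such a $d$ then satisfies $s\mathfrak p=0$.

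There is no real obstacle here. The only point needing care is that the $D$-module structure on $\mathfrak p$ is well defined, and this is exactly the consequence of $\mathfrak p^2=0$ recorded in the first paragraph.
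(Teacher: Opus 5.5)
Your proposal is correct and is essentially the paper's own argument. Since $\mathfrak p^2=0$, the $R$-action on $\mathfrak p$ factors through $D$ and $\mathfrak p/\mathfrak p^2=\mathfrak p$, so the claim is exactly (i)$\Leftrightarrow$(v) of \cref{thm:complete}; your side check via condition (vii) is a valid consistency remark but is not needed.
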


\begin{proof}
Because $\mathfrak p^2=0$, the $R$-action on $\mathfrak p$ factors through $D$, and
$\mathfrak p/\mathfrak p^2=\mathfrak p$.  Apply \cref{thm:complete}.
\end{proof}

\begin{corollary}\label{cor:idealization}
Let $D$ be a non-field PID and let $M$ be a finitely generated $D$-module.  For the
idealization $R=D\ltimes M$,
\[
 R\text{ is iso-Artinian}
 \quad\Longleftrightarrow\quad
 M\text{ is torsion over }D.
\]
\end{corollary}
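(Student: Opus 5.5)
The plan is to reduce the statement to \cref{cor:square-zero} by checking that the idealization $R=D\ltimes M$ satisfies its hypotheses. First I record the standard structure of $R$. The ideal $\mathfrak p:=0\ltimes M$ satisfies $\mathfrak p^2=0$, since $(0,m)(0,m')=(0,0)$. Moreover $R/\mathfrak p\cong D$ via $(d,m)\mapsto d$. As $D$ is a domain, $\mathfrak p$ is prime. Every prime of $R$ contains the nilpotent ideal $\mathfrak p$, so $\mathfrak p$ is the unique minimal prime. The primes of $R$ then correspond bijectively to those of $D$, which gives $\dim R=\dim D=1$ because $D$ is a non-field PID. Finally, $R$ is Noetherian: it is a finitely generated module over the Noetherian ring $D$ (it is $D\oplus M$ as a $D$-module), so its ideals, being $D$-submodules, are finitely generated.

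Next I identify the $D$-module structure on $\mathfrak p$. For $(d,m)\in R$ and $(0,m')\in\mathfrak p$ we have $(d,m)(0,m')=(0,dm')$. Hence the $R$-action on $\mathfrak p$ factors through $D$, and $\mathfrak p\cong M$ as $D$-modules. Applying \cref{cor:square-zero}, $R$ is iso-Artinian if and only if $\mathfrak p$ is torsion over $D$, that is, if and only if $M$ is torsion.

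No step is genuinely difficult. The only point needing care is the Noetherianity of $R$, together with checking that the $D$-structure on $\mathfrak p$ used in \cref{cor:square-zero} (via $R\to R/\mathfrak p$) agrees with the given structure on $M$. Both checks are immediate from the multiplication rule above.
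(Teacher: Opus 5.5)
Your proposal is correct and matches the paper's proof: identify $\mathfrak p=0\ltimes M$ as the unique minimal prime with $\mathfrak p^2=0$, $R/\mathfrak p\cong D$ and $\mathfrak p\cong M$ as $D$-modules, then apply \cref{cor:square-zero}. Your explicit checks that $R$ is Noetherian and that $\dim R=1$ are details the paper leaves implicit.
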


\begin{proof}
The unique minimal prime is $\mathfrak p=0\ltimes M$, with $\mathfrak p^2=0$ and
$R/\mathfrak p\cong D$.  The conormal module is naturally
$\mathfrak p/\mathfrak p^2\cong M$.  Apply Corollary~\ref{cor:square-zero}.
\end{proof}

\end{document}